\newtheorem{theorem}{Theorem}[section]
\newtheorem{lemma}[theorem]{Lemma}
\theoremstyle{definition}
\theoremstyle{remark}
\newtheorem{remark}[theorem]{Remark}
\numberwithin{equation}{section}
\newcommand{\be}{\begin{equation}}
\newcommand{\ee}{\end{equation}}
\begin{document}

\title{Spectral Properties of Semi-classical Toeplitz Operators}
\author{V. Guillemin}
\address{Mathematics Department\\
Massachusetts Institute of Technology\\Cambridge, MA 02138}
\email{vwg@math.mit.edu}
\author{A. Uribe}
\address{Mathematics Department\\
University of Michigan\\Ann Arbor, Michigan 48109}
\email{uribe@umich.edu}
\author{Z. Wang}
\address{School of Mathematical Sciences\\
University of Science and Technology of China \\ Hefei, Anhui 230026 \\ P. R. China}
\email{wangzuoq@ustc.edu.cn}

\thanks{Z.W. is partially supported by the National Science Foundation of China, Grant No.  11571131 and  Grant No. 11526212. 
}

\date{\today}
\begin{abstract}
The main results of this paper are an asymptotic expansion in powers of $\hbar$ for the spectral measure $\mu_\hbar$ of a semi-classical Toeplitz operator, $Q_\hbar$, and an equivariant version of this result when $Q_\hbar$ admits an $n$-torus as a symmetry group. In addition we discuss some inverse spectral consequences of these results. 
\end{abstract}

\maketitle
\tableofcontents

\section{Introduction}
 
 Before we attempt to explain the ``semi-classical" in the title above, we will first review a few basic facts about the classical theory of Toeplitz operators (the topic of the Princeton series monograph \cite{BG}). To define these operators, let $X$ be a compact manifold and $\Sigma$ a closed symplectic cone in $T^*X-0$. To this cone one can attach a Hilbert subspace, $H^2(X)$, of $L^2(X)$ with the property that the orthogonal projection 
\begin{equation}\label{orthoproj}
\Pi: L^2(X) \to H^2(X)
\end{equation}
is a Fourier integral operator of Hermite type (in the sense of \cite{BG}, \S 3) with microsupport on the diagonal $\Delta_\Sigma$ in $\Sigma \times \Sigma$. The prototypical example of such an operator is the classical \emph{Szeg\"o projector}. Namely in this example $X$ is the boundary of a strictly pseudoconvex domain, $H^2(X)$ is the $L^2$-closure of the kernel of the $\bar \partial_b$ operator and $\Sigma$ is the characteristic variety of this operator. (For a slightly simplified version of this example see \S 2 below.) 
 
The Toeplitz operators we'll be concerned with in this paper are operators of the form 
\begin{equation}\label{ToepOpe}
Q=\Pi P \Pi,
\end{equation} 
 where $P$ is a classical pseudodifferential operator of order $d$ on $X$ whose symbol will be denoted by $p$. One of the main result of \cite{BG}, a result which we will make heavy use of below, is that in defining these operators one can assume without loss of generality that $P$ commutes with $\Pi$. (Thus in particular these operators form an algebra.)
 
 To return to the ``semi-classical" in the title of this article suppose that there is a free action of the circle group $S^1$ on $X$ and that the induced action on $L^2(X)$ preserves $H^2(X)$ and commutes with $P$. Then if we denote by $L^2(X)_k$ the subspace of functions $f \in L^2(X)$ satisfying 
 \begin{equation}
 \label{L2Xk}
 f(e^{i\theta}x)=e^{ik\theta}f(x),
 \end{equation}
$P$ preserves this space and also preserves the intersection, 
\[ H^2(X)_k = L^2(X)_k \cap H^2(X).\]
Moreover, the operator 
\begin{equation}\label{Phk}
P_\hbar = \hbar^l P|_{L^2(X)_k}, \qquad \hbar=\frac 1k
\end{equation}
can be viewed as a semi-classical pseudodifferential operator of degree $d-l$, and the operator 
\begin{equation}\label{Qhk}
Q_\hbar=P_\hbar|_{H^2(X)_k}
\end{equation}
as a semi-classical Toeplitz operator of degree $d$. 

In the result we're about to describe the symbols of these operators play a major role, and to describe these symbols let 
\[
\phi: T^*X \to \mathbb R
\]
be the moment map associated with the lifted action of $S^1$ on $T^*X$ and let
\begin{equation}\label{TstarXred}
(T^*X)_{red}=\phi^{-1}(1)/S^1
\end{equation}
be the symplectic reduction of $T^*X$ at $\phi=1$. Then since $p|_{\phi^{-1}(1)}$ is $S^1$ invariant it is the pull-back to $\phi^{-1}(1)$ of a function 
\begin{equation}
p_{red}: (T^*X)_{red} \to \mathbb R
\end{equation}
and this reduced symbol is the semi-classical symbol of $P_\hbar$. Moreover the quotient 
\begin{equation}\label{sigmared}
\Sigma_{red} = \phi^{-1}(1) \cap \Sigma /S^1
\end{equation}
sits inside $(T^*X)_{red}$ and the restriction of $p_{red}$ to $\Sigma_{red}$ is the semi-classical symbol of $Q_\hbar$. 

A few final assumptions: We will henceforth assume $P$ is a self-adjoint elliptic pseudodifferential operator of degree $d$ with positive leading symbol $p$. Therefore, since $X$ is compact, these assumptions imply that the spectrum of $P$ is discrete, bounded from below and has no finite points of accumulation and hence that the same is true of $P_\hbar$ and $Q_\hbar$. 

Also one additional assumption: In order for these operators to be of order zero we will assume that the $l$ in (\ref{Phk}) is equal to $d$.

We will now describe the main results of this paper. Let $\mu_\hbar$ be the spectral measure
\begin{equation}\label{specmeas}
\mu_\hbar(f):= \mathrm{trace} f(Q_\hbar), \qquad \forall f \in C_0^\infty(\mathbb R).
\end{equation}
We will prove
\begin{theorem}\label{thm1}
The spectral measure $\mu_\hbar$ admits an asymptotic expansion 
\begin{equation}\label{muhexpansion}
\mu_\hbar(f) \sim (2\pi \hbar)^{-m} \sum_{i=0}^\infty c_i(f) \hbar^i
\end{equation}
with $m= \frac 12 {\dim \Sigma_{red}} $ and 
\begin{equation}\label{c0f}
c_0(f) = \int_{\Sigma_{red}} (p_{red})^*f d\sigma,
\end{equation}
$\sigma$ being the symplectic volume form on $\Sigma_{red}$. 
\end{theorem}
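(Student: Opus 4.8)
The plan is to analyze the trace $\mu_\hbar(f) = \mathrm{trace}\, f(Q_\hbar)$ by realizing it as the trace of an operator on the fixed $k$-isotypic subspace $H^2(X)_k$ and then applying the Boutet de Monvel--Guillemin machinery of Hermite Fourier integral operators. The first step is to express $f(Q_\hbar)$ via a functional calculus suitable for Toeplitz operators: since $P$ commutes with $\Pi$ (using the cited result from \cite{BG}), $Q_\hbar$ is up to the isotypic restriction the compression of a self-adjoint elliptic pseudodifferential operator, so $f(Q_\hbar) = \Pi f(P_\hbar) \Pi$ on $H^2(X)_k$, and I would represent $f(P_\hbar)$ through a Fourier transform, $f(P_\hbar) = \frac{1}{2\pi}\int \hat f(t)\, e^{it P_\hbar}\, dt$, trading the functional calculus for the propagator $e^{itP_\hbar}$. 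The key point is that $e^{itP} \Pi$ is an Hermite Fourier integral operator associated to the canonical relation of the Hamilton flow of $p$ composed with the diagonal of $\Sigma \times \Sigma$, so $\Pi e^{itP}\Pi$ is again a Toeplitz-type operator and its Schwartz kernel has a known microlocal form.

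Next I would pass to the $k$-th Fourier component: restricting to $L^2(X)_k$ and inserting $\hbar = 1/k$ converts the homogeneous (classical) symbol calculus into a semi-classical one, with the $S^1$-moment map $\phi$ playing the role that reduces $T^*X$ to $(T^*X)_{red}$ at level $\phi = 1$, and $\Sigma$ descending to $\Sigma_{red}$. Concretely, $\mathrm{trace}\, f(Q_\hbar)$ is a sum over $k$-equivariant contributions, which by a stationary-phase / partition-of-unity argument localizes near the diagonal and near the characteristic cone. I would write the trace as an oscillatory integral
\[
\mu_\hbar(f) = \int \int \hat f(t)\, e^{i\Phi(t,x,\xi)/\hbar}\, a(t,x,\xi;\hbar)\, dt\, dx\, d\xi,
\]
where the phase $\Phi$ vanishes to the appropriate order exactly on the set $\{ p = \text{spectral value}\} \cap \phi^{-1}(1) \cap \Sigma$, and the amplitude $a$ has a classical expansion in $\hbar$ coming from the symbol calculus of Hermite FIOs. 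Applying stationary phase in the variables transverse to the critical manifold then produces the factor $(2\pi\hbar)^{-m}$ with $m = \frac12 \dim \Sigma_{red}$, an asymptotic series $\sum c_i(f)\hbar^i$, and identifies the leading coefficient: the critical manifold is $\Sigma_{red}$, the leading amplitude is the pull-back $(p_{red})^* f$ since the principal symbol of $f(Q_\hbar)$ is $f \circ p_{red}$, and the Hessian determinant from stationary phase combines with the symbol density to give exactly the symplectic (Liouville) volume form $d\sigma$ on $\Sigma_{red}$, yielding \eqref{c0f}.

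The main obstacle I anticipate is the bookkeeping at the interface of three symbol calculi: the homogeneous pseudodifferential calculus for $P$, the Hermite FIO calculus for $\Pi$ and for $e^{itP}\Pi$ (whose symbols are half-densities twisted by a trace over an auxiliary metaplectic/oscillator representation), and the semi-classical calculus obtained after restricting to $L^2(X)_k$. Getting the powers of $\hbar$ to match and, especially, correctly normalizing the leading density so that it comes out as the \emph{symplectic} volume $d\sigma$ on $\Sigma_{red}$ (rather than some metric-dependent or framing-dependent volume) requires care: one must check that the Hessian of $\Phi$ transverse to $\Sigma_{red}$, together with the symbol of $\Pi$ restricted to the diagonal, assembles into the Liouville form on the symplectic reduction. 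A secondary technical point is justifying that the $t$-integral can be localized near $t = 0$ (so that no contributions from periodic orbits of the reduced flow enter the leading asymptotics) — this is standard but needs the non-stationarity of the phase in $t$ away from the origin on the relevant part of phase space, which follows from ellipticity and positivity of $p$. Once these normalizations are pinned down, the higher coefficients $c_i(f)$ are determined formally by the stationary-phase expansion and the subprincipal data of $Q_\hbar$, with no further obstruction.
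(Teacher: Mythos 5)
Your proposal is sound in outline but follows a genuinely different route from the paper. You propose a direct attack: write $f(P_\hbar)$ via Fourier inversion, treat $\Pi e^{itP}\Pi$ as a Hermite Fourier integral operator, express the trace as an oscillatory integral, and extract the expansion by stationary phase transverse to $\Sigma_{red}$. The paper instead never confronts the Hermite symbol calculus head-on: it covers $\Sigma$ by conic open sets on which $\Pi$ is conjugated by invertible $S^1$-equivariant Fourier integral operators to the canonical model of \S 2, inserts an $S^1$-equivariant microlocal partition of unity $\{P_i\}$, and then uses the intertwining operator $R$ of (\ref{operatorR}) (with $R^tR=\mathrm{Id}$, $RR^t=\Pi$) to convert each summand $\Pi P_i f(P_\hbar)\Pi$ into an honest semi-classical pseudodifferential operator $\widetilde P_i f(\widetilde P_\hbar)$ on $\mathbb T^l$, whose trace expansion is already known (\cite{GS}, Theorem 12.13.1); summing over $i$ gives the theorem. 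The trade-off is instructive: your approach is self-contained and would generalize to situations where no such reduction to a known pseudodifferential result is available, but the two issues you yourself flag as the main obstacles --- matching the three symbol calculi and, above all, verifying that the transverse Hessian combined with the symbol of $\Pi$ produces exactly the symplectic volume $d\sigma$ on $\Sigma_{red}$ rather than some framing-dependent density --- constitute real and nontrivial work that your sketch defers; they are precisely the computations that the paper's conjugation-to-the-canonical-model argument is designed to avoid, since in the canonical model the Toeplitz trace literally \emph{equals} a pseudodifferential trace on the torus and the normalization of the leading density comes for free from the classical Weyl law. If you carry out your route, you should also note that since you use the unrescaled propagator $e^{itP_\hbar}$ (no $\hbar^{-1}$ in the exponent), $f(Q_\hbar)$ is itself a semi-classical Toeplitz operator with leading symbol $f\circ p_{red}$, so no periodic-orbit contributions arise at any order --- the localization near $t=0$ you worry about is only an issue for the rescaled wave trace.
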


We will also use this result to prove a slightly more general result. Suppose one has an action on $X$ of an $n$-torus $G$, and that this action commutes with $\Pi$. Then if the operator $P$ in (\ref{ToepOpe}) is $G$-invariant and $\alpha$ is an element of the weight lattice of $G$, the action of $G$ on $H^2(X)$ preserves the space of functions in $H^2(X)$ which transform under this action by the weight $k\alpha$. Denoting the space of these functions by $H^2_{k\alpha}$ and by $P_{k\alpha}$ the restriction of $P$ to this space. We will prove an analogue of Theorem 1 for the spectral measure of $P_{k\alpha}$: Since the action of $G$ preserves $\Pi$, it preserves the microsupport $\Sigma$ of $\Pi$ and hence induces a Hamiltonian action on $\Sigma$ with moment map 
\[
\phi: \Sigma \to \mathfrak g^*.
\]

Assume now that $\alpha$ is a regular value of this moment map and that $G$ acts freely on the level set $\phi^{-1}(\alpha)$ (the assumptions required for the symplectic reduction of $\Sigma$ at $\alpha$
\begin{equation}
\label{sigmaalpha}
\Sigma_\alpha = \phi^{-1}(\alpha)/G
\end{equation}
to be well-defined.)  We will prove
\begin{theorem}\label{thm2}
The spectral measure of $\Pi P_{k\alpha}\Pi$, $k = \frac 1\hbar$, admits an asymptotic expansion 
\begin{equation}
\label{PiPkalphaPiExp}
(2\pi \hbar)^{-m} \sum_{i=1}^\infty \mu_{\alpha, i} \hbar^i
\end{equation}
where $m=\frac 12{\dim \Sigma_\alpha}$ and 
\begin{equation}\label{mualpha0}
\mu_{\alpha, 0}(f)=\int_{\Sigma_\alpha} p_\alpha^*f d\sigma,
\end{equation}
$\sigma$ being the symplectic volume form on $\Sigma_\alpha$ and $p_\alpha$ the ``reduced" semi-classical symbol of $P$, i.e. the map $p_\alpha: \Sigma_\alpha \to \mathbb R$ defined by 
\begin{equation}
\gamma^*p_\alpha = p|_{\phi^{-1}(\alpha)},
\end{equation}
where $\gamma$ is the projection of $\phi^{-1}(\alpha)$ onto $\Sigma_\alpha$.
\end{theorem}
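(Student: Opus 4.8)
The plan is to reduce Theorem \ref{thm2} to Theorem \ref{thm1} by a change of perspective: rather than viewing $P_{k\alpha}$ as living on a single isotypic component of an $n$-torus action, we realize it as the $k$-th isotypic component of a single circle action obtained by composing the torus action with the one-parameter subgroup determined by $\alpha$. Concretely, since $\alpha$ lies in the weight lattice of $G$, it defines a homomorphism $\lambda_\alpha: S^1 \to G$, $e^{i\theta}\mapsto \exp(\theta \alpha^\vee)$ (dualizing $\alpha$ via the lattice), and pulling back the $G$-action along $\lambda_\alpha$ gives an $S^1$-action on $X$ commuting with $\Pi$ and $P$. A function in $H^2(X)$ transforms under $G$ by the weight $k\alpha$ precisely when — after restricting attention to the $G$-invariance already imposed — it transforms under this $S^1$ by $e^{ik\theta}$; more care is needed because the $G$-isotypic space is cut out by $n$ conditions, not one, so the cleanest route is to first decompose $H^2(X)$ under the full torus and observe that $H^2_{k\alpha}$ is exactly $H^2(X)_k$ for the circle $\lambda_\alpha(S^1)$ intersected with the $G$-weight-$k\alpha$ condition; but since $P$ is $G$-invariant, on each such space $P$ agrees with its restriction, and the symbol computation localizes to $\phi^{-1}(\alpha)$ anyway.

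First I would set up the semiclassical reduction precisely. The moment map for the $\lambda_\alpha$-circle is $\phi_\alpha = \langle \phi, \alpha^\vee\rangle: \Sigma \to \mathbb R$, and the level set $\phi_\alpha^{-1}(1)$ contains $\phi^{-1}(\alpha)$. The regularity of $\alpha$ as a value of the $G$-moment map $\phi$ and the freeness of the $G$-action on $\phi^{-1}(\alpha)$ guarantee that the symplectic reduction $\Sigma_\alpha = \phi^{-1}(\alpha)/G$ is a smooth symplectic manifold, and one checks that this is canonically the same as the reduction relevant to the single-circle picture once one also quotients by the complementary $(n-1)$-torus directions. The upshot is that the reduced space appearing in Theorem \ref{thm1} applied to $Q_\hbar = \Pi P_{k\alpha}\Pi$ is precisely $\Sigma_\alpha$, with $\dim\Sigma_\alpha = \dim\Sigma - 2n$, so $m = \frac12\dim\Sigma_\alpha$ as claimed, and the reduced symbol $p_{red}$ of Theorem \ref{thm1} is exactly the map $p_\alpha$ defined by $\gamma^* p_\alpha = p|_{\phi^{-1}(\alpha)}$.

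Next I would verify that the hypotheses of Theorem \ref{thm1} are met in this setting: $P$ is self-adjoint, elliptic of order $d$ with positive leading symbol, the $S^1 = \lambda_\alpha(S^1)$ action is (generically, after possibly passing to a finite cover or using the free $G$-action) such that $P_\hbar = \hbar^d P|_{L^2(X)_k}$ is an order-zero semiclassical pseudodifferential operator, and $\Pi$ commutes with the action. With all of this in place, the asymptotic expansion \eqref{muhexpansion} of Theorem \ref{thm1} transcribes directly into \eqref{PiPkalphaPiExp}, with leading coefficient $c_0(f) = \int_{\Sigma_\alpha} p_\alpha^* f\, d\sigma = \mu_{\alpha,0}(f)$. (The index shift from $i=0$ to $i=1$ in \eqref{PiPkalphaPiExp} appears to be a typo for $i=0$; I would flag this but otherwise match the $c_i$ with the $\mu_{\alpha,i}$.)

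The main obstacle I anticipate is the bookkeeping of passing from an $n$-torus isotypic decomposition to a single-circle isotypic decomposition without losing or double-counting: one must check that restricting $P$ to $H^2_{k\alpha}$ and restricting $\lambda_\alpha^*P$ to the $k$-eigenspace of the $\lambda_\alpha$-circle genuinely give the same operator with the same trace, which requires knowing that the $G$-invariance already built into the problem makes the extra $(n-1)$ torus directions ``silent.'' Relatedly, one must ensure the freeness needed for Theorem \ref{thm1}'s circle action — freeness of the full $G$-action on $\phi^{-1}(\alpha)$ is given, but freeness of the sub-circle may fail, and one should address this either by a stationary-phase computation that only sees a neighborhood of $\phi^{-1}(\alpha)$ (where the $G$-action, hence effectively the relevant dynamics, is free) or by observing that the trace $\mu_\hbar(f) = \operatorname{trace} f(Q_\hbar)$ only depends on the $G$-weight-$k\alpha$ block and is insensitive to the behavior of $\lambda_\alpha(S^1)$ elsewhere. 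I expect this to be handled cleanly by the composition formula for Toeplitz operators and the localization of the symbol calculus near $\phi^{-1}(\alpha)$, but it is the step that needs genuine care rather than mere citation of Theorem \ref{thm1}.
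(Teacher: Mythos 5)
Your overall strategy---reduce Theorem \ref{thm2} to Theorem \ref{thm1}---is the right one, but the specific mechanism you propose does not isolate the space $H^2_{k\alpha}$, and this is not a bookkeeping issue that the symbol calculus resolves ``anyway.'' If you pull the $G$-action back along the one-parameter subgroup $\lambda_\alpha: S^1\to G$, the $k$-th isotypic component of that circle inside $H^2(X)$ is the direct sum of \emph{all} weight spaces $H^2_\beta$ with $\langle\beta,\alpha^\vee\rangle=k$, of which $H^2_{k\alpha}$ is only one summand. Theorem \ref{thm1} applied to that circle therefore computes the trace over this much larger space; its leading asymptotics live on the reduction of $\Sigma$ at the hypersurface $\phi_\alpha^{-1}(1)$, which has dimension $\dim\Sigma-2$, not on $\Sigma_\alpha=\phi^{-1}(\alpha)/G$, which has dimension $\dim\Sigma-2n$. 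So you would get the wrong power of $\hbar$ and the wrong reduced space. You correctly flag this danger (``the $G$-isotypic space is cut out by $n$ conditions, not one''), but the proposed remedies are circular: imposing ``the $G$-weight-$k\alpha$ condition'' by hand is exactly the thing that needs an operator-theoretic implementation, and nothing in the single-circle version of Theorem \ref{thm1} localizes the computation to $\phi^{-1}(\alpha)$ rather than to all of $\phi_\alpha^{-1}(1)$.

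The paper supplies the missing $n-1$ conditions by a reduction in stages that is dual to your construction. Let $\mathfrak g_\alpha=\{v\in\mathfrak g:\alpha(v)=0\}$ and let $G_\alpha$ be the corresponding codimension-one subtorus (the annihilator of $\alpha$, not the circle it generates). Assuming $G_\alpha$ acts freely, one forms $Y=X/G_\alpha$ and takes $H^2(Y)$ to be the $G_\alpha$-invariant functions in $H^2(X)$; the associated projector is again a Toeplitz projector, now with microsupport on the diagonal of $\Sigma_{\alpha,0}\times\Sigma_{\alpha,0}$, where $\Sigma_{\alpha,0}$ is the reduction of $\Sigma$ by $G_\alpha$ at level zero. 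Because a $G_\alpha$-invariant function can only carry $G$-weights proportional to $\alpha$, the weight decomposition of $H^2(Y)$ under the residual circle $G/G_\alpha$ is exactly $\bigoplus_k H^2(X)_{k\alpha}$: the unwanted weights $\beta\neq k\alpha$ have already been projected away. Theorem \ref{thm1} then applies verbatim on $Y$, and the reduction of $\Sigma_{\alpha,0}$ at level one of the residual circle's moment map is $\Sigma_\alpha$, giving the stated $m$ and leading term. Your observation that the index in \eqref{PiPkalphaPiExp} should start at $i=0$ is correct.
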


\begin{remark}
There are two interesting antecedents for the results we have just described: one is a theorem of Borthwick-Paul-Uribe having to do with Toeplitz operators on line bundles over projective varieties (see \cite{BPU}) and the second an analogue of the theorem above for semi-classical pseudodifferential operators  (see \cite{GS} chapter 12, theorem 12.13.1.)
\end{remark}

We will conclude this introduction by giving a quick overview of the contents of sections 2-8: In section 2 we will recall how classical Szeg\"o projectors and Toeplitz operators are defined and in section 3 we will describe the symbol calculus of these operators. Then in section 4 we'll transition to the semi-classical setting and give a brief account of the ``alternative approach" to defining Toeplitz operators that was described in \S 3.4 of \cite{GUW} and use this approach to show that the $P_\hbar$'s above are operators of this type. Then in section 5 we will prove Theorem \ref{thm1} and in section 6 deduce Theorem \ref{thm2} from Theorem \ref{thm1}. 

Finally in sections 7 and 8 we will discuss some applications of Theorem \ref{thm2} to inverse  problems involving the ``equivariant spectrum" of the Toeplitz operator (\ref{ToepOpe}), i.e. the spectral invariants of this operator associated with the asymptotic expansion (\ref{PiPkalphaPiExp}). More explicitly in section 7 we will show that for Toeplitz operators on the boundary of the unit ball in $\mathbb C^n$, the symbols of these operators are e-spectrally determined and in section 8 we will show by a ``reduction in stages" argument that this result implies an analogous result for Toeplitz operators on arbitrary toric varieties.

A concluding remark: Though the results of this paper are officially  theorems in semi-classical analysis, the theorems in sections 7 and 8  can also be interpreted as illustrations of the ``quantization commutes  with reduction" principle for $G$-actions on symplectic manifolds, a  principle which had its origins in the work of Bert Kostant and his  collaborators, and for this reason we deem it an honor to be able to  contribute this paper to the Kostant memorial volume.

\section{The classical theory of Toeplitz operators}

The Toeplitz operators that one studies in complex analysis are the operators that we alluded to in the introduction (i.e. the Toeplitz operators on the boundary of a strictly pseudoconvex domain) and the ``standard model" that one usually chooses for this class of operators is the algebra of Toeplitz operators on the unit ball $B^n$ in $\mathbb C^n$. However, since we are concerned in this article not only with the Toeplitz operators that come up in complex analysis, but also with Toeplitz operators that come up in other contexts as well we will choose a somewhat simpler ``standard model". Namely we will choose for the manifold $X$ in \S 1 the product space $\mathbb R^k \times \mathbb T^l$, where $\mathbb T^l$ is the standard $l$ dimensional torus
\[
S^1 \times \cdots \times S^1 \qquad (l \text{\ factors})
\]
and $k+l=n$, and define our Hardy space $H^2(X)$ to be the space of $L^2$ solutions of the system of equation 
\begin{equation}\label{modelope}
\frac 1{\sqrt{-1}} \left( \frac{\partial}{\partial y_j}+y_j |D_\theta|\right)f=0,\qquad j=1, \cdots, k,
\end{equation}
where $|D_\theta|$ is the square root of the Laplace operator on $\mathbb T^l$. For each $0 \ne m \in \mathbb Z^l$ the function 
\begin{equation}
\label{fmfunction}
 f_m=e^{-\frac{|y|^2|m|}2}\left(\frac{|m|}{\pi}\right)^k e^{im\theta}
 \end{equation}
is a solution of (\ref{modelope}) and these functions form an orthogonal basis of $H^2(X)$. In particular the map 
\begin{equation}\label{operatorR}
 R: L^2(\mathbb T^l) \to L^2(X)
\end{equation}
mapping $e^{im\theta}$ onto $f_m$ is an orthonormal embedding of $L^2(\mathbb T^l)$ onto $L^2(X)$ and if we denote by $\Pi$ the orthogonal projection 
\begin{equation}\label{OrthProPi}
 L^2(X) \to H^2(X)
\end{equation}
the map $R$ satisfies the identities 
\begin{equation}\label{RtR}
R^tR=\mathrm{Id}
\end{equation}
and
\begin{equation}
\label{RRt}
RR^t=\Pi.
\end{equation}

Finally we note that if $\eta$ and $\tau$ are the dual variables to $y$ and $\theta$ in the cotangent bundle of $\mathbb R^k \times \mathbb T^l$, then the symbol of the operator (\ref{modelope})  is 
\begin{equation}\label{symbmodel}
\frac 1{\sqrt{-1}} \eta_j + y_j |\tau|
\end{equation}
and hence the characteristic  variety of the system of equation defining $H^2(X)$ is 
\begin{equation}\label{yetazero}
y=\eta=0,
\end{equation}
i.e. is the image of the embedding 
\begin{equation}\label{embcotan}
T^*\mathbb T^l \to T^*X
\end{equation}
that maps $T^*\mathbb T^l$ onto the second factor of the product 
\begin{equation}\label{factorTstar}
T^*X = T^*\mathbb R^k \times T^* \mathbb T^l.
\end{equation}
In particular this characteristic variety is the symplectic cone 
\begin{equation}\label{symplconeSig}
\Sigma = T^*\mathbb T^l
\end{equation}
in the product $T^*(\mathbb R^k \times \mathbb T^l)$ and hence the characteristic variety of the projection $\Pi$ is the diagonal 
\begin{equation}\label{DeltaSigma}
\Delta_\Sigma=T^*\mathbb T^l
\end{equation}
in the product $T^*\mathbb T^l \times T^*\mathbb T^l$.

Turning to the topic of this section: ``Toeplitz operators". These are by definition operators on $H^2(X)$ of the form $\Pi P \Pi$ where $P$ is a classical pseudodifferential operator on $X$. Alternatively they can be described as operators of the form 
\[RQR^t,\]
where $Q: C^\infty(\mathbb T^l) \to C^\infty(\mathbb T^l)$ is a classical pseudodifferential operator on $\mathbb T^l$ or as an operator of the form $P\Pi$, where $P$ is a classical pseudodifferential operator on $X$ that commutes with $\Pi$ (For the equivalence of these three definitions see \cite{BG}. We note that by definition 3 these operators form an algebra, and that by definition 2 this algebra is isomorphic to the algebra of classical pseudodifferential operators on $\mathbb T^l$.) 

With these operators as our ``standard model" we will define Toeplitz operates in general to be operators which are microlocally isomorphic to the operators in this standard model. More explicitly let $X$ be an $n$ dimensional manifold and $\Sigma \subset T^*X-0$ a symplectic cone. We will define a ``Toeplitzification" of $\Sigma$ to be a self-adjoint linear operator $\Pi: L^2(X) \to L^2(X)$ with the properties 
\begin{enumerate}
\item $\Pi^2=\Pi$,
\item The microsupport of $\Pi$ is the diagonal $\Delta_\Sigma$ in $\Sigma \times \Sigma$,
\item At every point $(p, \xi) \in \Delta_\Sigma$, $\Pi$ is microlocally isomorphic to the ``$\Pi$" in the canonical model above.
\end{enumerate}
Moreover, if one has an $S^1$ action on $X$ and the induced action on $L^2(X)$ commutes with $\Pi$, we will call this an \emph{equivariant Toeplitz structure} if the isomorphism above conjugates this $S^1$ action to the action of a circle subgroup of $\mathbb T^l$ on the canonical model. 

\section{The symbol calculus for Toeplitz operators}

The operator $\Pi P \Pi$ that we defined in \S 2 is an example of a ``Fourier integral operator of Hermite type" and the symbol calculus for these operators is fairly complicated. (In particular, it involves ``symplectic spinors" objects which we won't attempt to define except to say that they are section of a vector bundle over $\Sigma$ whose fiber at $p \in \Sigma$ is an infinite dimensional vector space.) Fortunately however, for Teoplitz operators there is an alternative symbol calculus that is much simpler and is based on the following result:
\begin{theorem}\label{thm31}
Let $P: C^\infty(X) \to C^\infty(X)$ be a classical pseudodifferential operator of order $m$ and let $Q=\Pi P \Pi$. Then if $p: T^*X-0 \to \mathbb C$ is the leading symbol of $P$, its restriction 
\begin{equation}\label{symbolrestr}
p|_\Sigma = q
\end{equation}
is an intrinsically defined symbol of $Q$, i.e. doesn't depend on the extension $P$ of $Q$ to $L^2(X)$. Moreover, if this symbol vanishes there exists a pseudodifferential operator $P': C^\infty(X) \to C^\infty(X)$ of order $m-1$ with $Q=\Pi P' \Pi$.
\end{theorem}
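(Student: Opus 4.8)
The plan is to reduce the statement to the standard model of \S 2 and then to identify the symbol of $Q$ by an explicit computation there. By property (3) in the definition of a Toeplitzification (\S 2), near each point of $\Sigma$ the projector $\Pi$ is conjugate, through an elliptic Fourier integral operator quantizing a homogeneous canonical transformation, to the model Szeg\"o projector $\Pi=RR^t$ of \S 2, where $X=\mathbb{R}^k\times\mathbb{T}^l$, $\Sigma=T^*\mathbb{T}^l$, and $R$ is the isometry (\ref{operatorR}) sending $e^{im\theta}$ to the squeezed Gaussian $f_m$ of (\ref{fmfunction}). Conjugation by such an operator carries classical pseudodifferential operators to classical pseudodifferential operators and transforms their leading symbols covariantly by the underlying canonical transformation, which maps $\Sigma$ onto $\Sigma$; hence both assertions of Theorem \ref{thm31} are invariant under it and it suffices to prove the theorem in the model, patching the local conclusions over a cover of $\Sigma$ by such charts in the general case, the overlap terms being of one lower order.

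\emph{The model computation.} In the model, (\ref{RtR}) and (\ref{RRt}) give $\Pi P\Pi=R\,\widehat P\,R^t$ with $\widehat P:=R^tPR=R^t(\Pi P\Pi)R=R^tQR$, so $\widehat P$ depends only on $Q$. The heart of the proof is to show that $\widehat P$ is a classical pseudodifferential operator of order $m$ on $\mathbb{T}^l$ whose leading symbol, under the identification $T^*\mathbb{T}^l=\Sigma$, equals $p|_\Sigma$; granting this, the first assertion is immediate, since the leading symbol of $\widehat P=R^tQR$ is an invariant of $Q$ and therefore independent of the extension $P$. To prove the claimed identity I would represent $P$ near $\Sigma$ by a symbol $p(\theta,y,\tau,\eta)$, write the Schwartz kernel of $\widehat P$ as an oscillatory integral built from the Gaussians $f_m$ --- whose normalizations are precisely the ones that make $R$ isometric --- and apply the method of stationary phase in the Euclidean variables, with the torus variables carried along as parameters. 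The wave packets $f_m$ are phase-space-concentrated on $\Sigma=\{y=\eta=0\}$, with Gaussian width of order $|m|^{-1/2}$ in $y$; the relevant critical point is nondegenerate and sits over $y=0$ and the zero section of the Euclidean cofibre, so the leading term is the value of $p$ there, namely $(p|_\Sigma)(\theta,\tau)$, the leading amplitude being exactly $1$ by the normalization of the $f_m$.

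\emph{Lowering the order.} Suppose $q=p|_\Sigma=0$. Then $\widehat P=R^tQR$ is a pseudodifferential operator of order $m-1$ on $\mathbb{T}^l$. Extend the full symbol of $\widehat P$ from $\Sigma=T^*\mathbb{T}^l$ to a symbol on $T^*X$ near $\Sigma$, multiplying by a cutoff equal to $1$ near the Euclidean zero section, and let $\widetilde P$ be a corresponding classical pseudodifferential operator on $X$, of order $m-1$; by the identity just proved, applied in all orders, $R^t\widetilde P R=\widehat P+K_0$ with $K_0$ smoothing on $\mathbb{T}^l$. Hence $\Pi\widetilde P\Pi=R\widehat PR^t+RK_0R^t=Q+RK_0R^t$, and since $RK_0R^t$ is a smoothing operator on $X$ which by (\ref{RtR}) satisfies $\Pi(RK_0R^t)\Pi=RK_0R^t$, the operator $P':=\widetilde P-RK_0R^t$ is a classical pseudodifferential operator of order $m-1$ with $\Pi P'\Pi=Q$; transporting back through the microlocal isomorphism gives $P'$ in general. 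Equivalently, this is the order-preserving form of the equivalence between the three descriptions of Toeplitz operators recorded in \S 2, for which see \cite{BG}.

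\emph{Main obstacle.} The one genuinely analytic step is the model identity asserting that the leading symbol of $R^tPR$ is $p|_\Sigma$: one must run the stationary-phase expansion for the squeezed Gaussians $f_m$ with uniform control in the off-diagonal parameter $m-m'$, check that it assembles into a classical, polyhomogeneous symbol on $T^*\mathbb{T}^l$ of the correct homogeneity, and verify that the leading amplitude is exactly $1$. Everything else is formal manipulation with (\ref{RtR})--(\ref{RRt}) together with standard microlocal patching.
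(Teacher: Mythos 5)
Your proposal follows essentially the same route as the paper: reduce to the standard model of \S 2 via the microlocal isomorphism, and identify the symbol of $Q$ with the leading symbol of the classical pseudodifferential operator $R^tQR$ on $\mathbb{T}^l$, which manifestly depends only on $Q$. The paper's proof is just this two-line reduction (deferring the model computation and the order-lowering step to \cite{BG}), so your version is the same argument with the details you flag---the stationary-phase identification of the leading symbol of $R^tPR$ with $p|_\Sigma$ and the construction of $P'$---filled in correctly.
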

\begin{proof}
To prove this it suffices to prove this for the ``standard model" described in \S 2. However, for the standard model the symbol we've just described, the symbol of $Q$ is the symbol of the classical pseudodifferential operator $RQR^t$ and hence only depends on the restriction of $p$ to $\Sigma$.
\end{proof}

\section{The semi-classical theory of Toeplitz operators}

The approach to this subject that we will describe below is motivated by the alternative approach to the theory of semi-classical pseudodifferential operators developed by T. Paul and A. Uribe in \cite{PU}. To describe this approach let $Y$ be a manifold, let $X$  be the product manifold $Y \times S^1$, and let 
\begin{equation}\label{Pope}
P: C^\infty(X) \to C^\infty(X)
\end{equation}
be a classical pseudodifferential operator which is invariant with respect to the action of $S^1$ on $Y \times S^1$. Its symbol then has the form $p(y, \eta, \tau)$, where $\tau$ is the dual cotangent variable to the angle variable $\theta$. Now let
\begin{equation}
\label{L2decom}
L^2(X) = \bigoplus_k L^2(X)_k
\end{equation}
be the orthogonal decomposition of $L^2(X)$ into the sum of the subspaces, $L^2(X)_k$, of functions in $L^2(X)$ which transform under the action of $S^1$ by the recipe 
\begin{equation}\label{weightkfunc}
(\tau_\theta f)(x) = e^{ik\theta}f(x).
\end{equation}
Then the operator 
\begin{equation}\label{Phbark}
P_\hbar=\hbar^lP|_{L^2(X)_k}
\end{equation}
can be thought of as a semi-classical pseudodifferential operator of order $m-l$. Moreover, via the identification 
\begin{equation}\label{IdYXk}
L^2(Y) \to L^2(X)_k
\end{equation}
mapping $f(y)$ to $f(y)e^{ik\theta}$ it can be regarded as an operator on $Y$. (For instance if $X$ is equipped with a Riemannian metric $\sum g_{ij}(y) dy_idy_j+V(y)(d\theta)^2$, then for $l=2$ the Laplace operator on $X$ gets converted by this process into the Schr\"odinger operator
\begin{equation}\label{SchroOpe}
\hbar^2 \Delta_Y +V(y)
\end{equation}
on $Y$.) Another key definition in this approach to theory of semi-classical pseudodifferential operators is the notion of symbol for the operators above. The semi-classical symbol of (\ref{Phbark}) being the function 
\begin{equation}\label{p}
p(y, \eta, 1), 
\end{equation}
where $p(y, \eta, \tau)$ is the symbol of the classical $S^1$-invariant pseudodifferential operator $P$, $\eta$ being the dual cotangent variable to $y$ and $\tau$ the dual cotangent variable to $\theta$. 

Turning finally to the topic of this paper, this alternative approach to the semi-classical theory of pseudodifferential operators can also be applied to Toeplitz operator by mimicking the definition above of semi-classical pseudodifferential operators. Namely if $Q$ is the classical Toeplitz operator $\Pi P \Pi$, its semi-classical counterpart is the analogue 
\begin{equation}\label{Qhbark}
Q_\hbar=\hbar^l Q|_{H^2(X)_k},
\end{equation}
where $H^2(X)_k$ is the intersection of $H^2(X)$ with $L^2(X)_k$. 

Finally we note that the theory above can be generalized slightly by allowing the manifold $X=Y \times S^1$ to be replaced by a manifold $X$ on which one has a free action of the circle group $S^1$. In this case the semi-classical operators $P_\hbar$ and $Q_\hbar$ can no longer be thought of as operators on $Y$, but one can get around this problem either by thinking of them as operators on sections of $\mathbb L^k$, where $\mathbb L^k$ is the line bundle 
\begin{equation}\label{Lkquotient}
X \times \mathbb C /S^1,
\end{equation}
$S^1$ acting on $\mathbb C$ by multiplication by $e^{ik\theta}$, or by thinking of them in terms of local trivialization of $\mathbb L$ over open subsets of $X$. (In the next two sections of this paper we will for the most part adopt this second approach.)

\section{The proof of Theorem \ref{thm1}}

Let $\{\mathcal U_i, i=1, \cdots, N\}$ be a covering of $\Sigma$ by conic open subsets of $T^*X-0$ having the property that on $\mathcal U_i$ the Toeplitz projector, $\Pi$, is conjugated by an invertible $S^1$-equivariant Fourier integral operator to the Toeplitz projector in the canonical model that we described in section 2. In addition let 
\[
\{P_i, i=1, \cdots, N\}
\]
be an $S^1$-equivariant microlocal partition of unity subordinate to this cover (i.e. each $P_i$ is a classical zeroth order pseudodifferential operator which is $S^1$ invariant and has microsupport on $\mathcal U_i$; and the sum of these $P_i$'s is microlocally equal to the identity operator on a conic open neighborhood of $\Sigma$.) Thus to compute the trace of $\Pi f(P_\hbar) \Pi$ it suffices to compute the traces of each of the summands in the sum 
\begin{equation}\label{sumlocal}
\sum_i \Pi P_i f(P_\hbar) \Pi
\end{equation}
and to compute the trace of $\Pi P_i f(P_\hbar) \Pi$ it suffices, by conjugation by an invertible $S^1$ equivariant Fourier integral operator to compute the trace of the operators corresponding to $\Pi P_i f(P_\hbar) \Pi$ in the canonical model. However, in this canonical model we can assume that the $P_i$'s and $f(P_\hbar)$ commute with $\Pi$ and hence that this trace is identified with the trace of the semi-classical pseudodifferential operator 
\begin{equation}\label{tildePif}
\widetilde P_i f(\widetilde P_\hbar),
\end{equation}
where $\widetilde P_i=R^t P_i R$, and $\widetilde P_\hbar=R^t P_\hbar R$, $R$ being the ``Fourier integral operator of Hermite type" defined by (\ref{operatorR}). However, the $\widetilde P_i$'s are classical pseudodifferential operates and $\widetilde P_\hbar$  a semi-classical pseudodifferential operator. Therefore since $\widetilde P_i$ is $S^1$ equivariant,  $\widetilde P_i f(\widetilde P_\hbar)$ is a semi-classical pseudodifferential operator as well; so to compute the trace of the $i^{th}$ summand of (\ref{sumlocal}) we are reduced to computing the trace of this pseudodifferential operator. Fortunately, however, the pseudodifferential version of the theorem we are trying to prove is well-known; i.e. one has an asymptotic expansion of this trace in the power of $\hbar$ which is identical with the expansions (\ref{muhexpansion}) and (\ref{c0f}) except for the fact that $\Sigma$ is now the cotangent bundle of $\mathbb T^l$ rather than the factor $T^*\mathbb T^l$ in the product (\ref{factorTstar}) (and except for the fact that one has to insert a factor $\rho_i$ in the integral (\ref{c0f}),  $\rho_i$ being the symbol of $P_i$.) Thus, conjugating the operators $\Pi P_i f(P_\hbar) \Pi$ in the $i^{th}$ summand of (\ref{sumlocal}) into the canonical model and summing over $i$ we get the asymptotic expansions (\ref{muhexpansion}) and (\ref{c0f}).

\begin{remark}
For a proof of the pseudodifferential version of Theorem 1 see \cite{GS}, \S 12.12, page 330-331. Also, for a somewhat simpler proof of an enlightening special case of this theorem in which $P_\hbar$ is Schr\"odinger operator $\hbar^2 \Delta+V$, see \cite{DGS} (and for some inverse spectral applications of this result see \cite{GW}).
\end{remark}

\section{The proof of Theorem \ref{thm2}}

We will briefly describe how the expansion (\ref{PiPkalphaPiExp}) can be deduced from the expansion (\ref{muhexpansion}). Let $\mathfrak g$ be the Lie algebra of $G$ and let 
\[\mathfrak g_\alpha=\{ v \in \mathfrak g :  \alpha(v)=0\}.\]
This ``$\mathfrak g_\alpha$" is the Lie algebra of a codimension one subtorus, $G_\alpha$, of $G$ and assuming that this subtorus acts freely on $X$, the quotient manifold 
\[Y=X/G_\alpha\]
is well defined and the projection 
\begin{equation}\label{proXY}
\pi: X \to Y
\end{equation}
a smooth fibration. Let $\pi^*T^*Y$ be the ``horizontal subbundle" of $T^*X$ with respect to this projection. Then one gets from (\ref{proXY}) a fibration
\begin{equation}\label{pullproXY}
\pi^*T^*Y \to T^*Y
\end{equation}
mapping the intersection $\Sigma \cap \pi^* T^*Y$ onto a symplectic cone, $\Sigma_{\alpha, 0}$, in $T^*Y$ and this cone can easily be seen to be the symplectic reduction of $\Sigma$ with respect to the action of $G_\alpha$ on the zero level set of its $G_\alpha$ moment map. 

Moreover, this reduced space has a residual action of the circle group $G/G_\alpha$, and it is easily seen that the symplectic reduction of $\Sigma_{\alpha, 0}$ with respect to the action of this circle group at the ``one" level set of its moment map is $\Sigma_\alpha$. 

We'll next describe the quantum analogue of this picture. Let $H^2(Y)$ be the space of $G_\alpha$-invariant functions in $H^2(X)$. Then the orthogonal projection 
\begin{equation}\label{piyyx}
\pi_Y: L^2(Y) \to H^2(Y)
\end{equation}
is a Toeplitz projector with microsupport on the diagonal in $\Sigma_{\alpha,0} \times \Sigma_{\alpha,0}$. Moreover, there is a residual action of the circle group $G/G_\alpha$ on this space and the corresponding decomposition into weight spaces 
\begin{equation}
H^2(Y) = \bigoplus_k H^2(Y)_k
\end{equation}
coincides with the sum 
\begin{equation}
\bigoplus_k H^2(X)_{k\alpha},
\end{equation}
and as a consequence of these observations the asymptotic formula (\ref{PiPkalphaPiExp}) can easily be deduced from the asymptotic formula (\ref{muhexpansion}) with $X$ replaced by $Y$.

\section{The basic example}

As we mentioned in the introduction to \S 2 the ``basic example" of a Szeg\"o projector for complex analysts is the example associated with the boundary, $S^{2n-1}$, of the unit ball, $B^n$, in $\mathbb C^n$; i.e. in this example $H^2(S^{2n-1})$ is the subspace of $L^2(S^{2n-1})$ spanned by the functions 
\begin{equation}\label{HSS2n-1}
f|_{S^{2n-1}}, \quad f \in \mathcal O(B^n) \cap C^\infty(\overline B^n), 
\end{equation}
and the symplectic cone $\Sigma$ in $T^*S^{2n-1}$ can be identified with $\mathbb C^n-0$, the symplectic form on this cone being the standard K\"ahler form 
\begin{equation}
\omega = \sqrt{-1} \sum_i dz_i \wedge d\bar z_i. 
\end{equation}
Moreover, $\omega$ is preserved by the action on $\mathbb C^n-0$ of the $n$-torus 
\begin{equation}
T^n = S^1 \times \cdots \times S^1
\end{equation}
and this action is a Hamiltonian action with moment map 
\begin{equation}\label{mmapT}
\phi(z_1, \cdots, z_n) = (|z_1|^2, \cdots, |z_n|^2)
\end{equation}
and hence one gets other interesting examples by reduction by subgroups $G$ of $T^n$. We'll focus in this section on the simplest of these examples, reduction by $T^n$ itself, and in this case we'll show that the asymptotic formula (\ref{PiPkalphaPiExp}) in Theorem \ref{thm2} is essentially a formula for the eigenvalues of the operator $Q_\hbar=  \Pi P_\hbar \Pi$. 

The moment map associated  with the $T^n$ action on $\mathbb C^n$ is the mapping (\ref{mmapT}). Therefore for $\alpha = (\alpha_1, \cdots, \alpha_n) \in \mathbb Z_+^n$, $\phi^{-1}(z)=\alpha$ if and only if $|z_i|^2 = \alpha_i$. Hence if the $\alpha_i$'s are all non-zero, $T^n$ acts freely on the space $\phi^{-1}(\alpha)$ and the reduced space 
\begin{equation}\label{redspatalpha}
\phi^{-1}(\alpha)/T^n
\end{equation}
consists of a single point, $P_\alpha$. Moreover, the quantum analogue, $H^2(X)_\alpha$, of this reduced space is the set of holomorphic functions $f \in \mathcal O(B^n)$ which transform under the action of $T^n$ by the weight $\alpha$, i.e. satisfy 
\begin{equation}
f(e^{i\theta_1}z_1, \cdots, e^{i\theta_n}z_n) = e^{i\alpha(\theta)}f(z)
\end{equation}
and this is just the one dimensional subspace of $\mathcal O(B^n)$ spanned by $z_1^{\alpha_1} \cdots z_n^{\alpha_n}$. However, these are exactly the eigenspaces of $Q_\hbar$;  so the asymptotic formula (\ref{PiPkalphaPiExp}) in Theorem \ref{thm2} is just an asymptotic formula for the equivariant eigenvalues of $Q_\hbar$. 


We will next show that, as one application of this formula, we get the following inverse spectral result.
\begin{theorem}
The symbol of the Toeplitz operator $Q$ is e-spectrally determined. 
\end{theorem}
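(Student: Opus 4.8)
The plan is to apply Theorem~\ref{thm2} to $G = T^n$ acting on $X = S^{2n-1}$, with $\Sigma \cong \mathbb{C}^n-0$ and moment map $\phi(z) = (|z_1|^2,\ldots,|z_n|^2)$ as above, and to use the fact --- already noted in this section --- that for $\alpha$ in the open positive orthant $\mathbb{Z}_{>0}^n$ of the weight lattice the reduced space $\Sigma_\alpha = \phi^{-1}(\alpha)/T^n$ is a single point $P_\alpha$. For such $\alpha$ one has $m = \tfrac12\dim\Sigma_\alpha = 0$, $\alpha$ is a regular value, $T^n$ acts freely on $\phi^{-1}(\alpha)$, and $H^2_{k\alpha}$ is the one--dimensional space $\mathbb{C}\cdot z_1^{k\alpha_1}\cdots z_n^{k\alpha_n}$; hence $\Pi P_{k\alpha}\Pi$ has a single eigenvalue $\lambda_\hbar$ and its spectral measure is the point mass $f\mapsto f(\lambda_\hbar)$. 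Matching this against the expansion (\ref{PiPkalphaPiExp}) forces $\mu_{\alpha,0}$ to be a point mass and $\lambda_\hbar$ to converge to its location; since $\mu_{\alpha,0}(f) = \int_{\Sigma_\alpha}p_\alpha^* f\,d\sigma$ and $\Sigma_\alpha = \{P_\alpha\}$, that location is $p_\alpha(P_\alpha)$, which by the definition of $p_\alpha$ is the constant value $p|_{\phi^{-1}(\alpha)}$. Thus the e-spectrum of $Q$ determines the number $p|_{\phi^{-1}(\alpha)}$ for every $\alpha\in\mathbb{Z}_{>0}^n$.

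The remaining task is to reconstruct all of $p$ from these lattice values. Because $P$ is $T^n$-invariant of order $d$, its leading symbol $p$ is $T^n$-invariant and homogeneous with respect to the cone dilation on $\Sigma$; hence $p(z) = \tilde p(|z_1|^2,\ldots,|z_n|^2)$ for a smooth function $\tilde p$ on the open orthant $\mathbb{R}_{>0}^n$, and $\tilde p$ is positively homogeneous of a fixed degree (equal to $d$; the precise value is immaterial). The previous step gives $\tilde p$ on $\mathbb{Z}_{>0}^n$; homogeneity then gives it on $\mathbb{Q}_{>0}^n$, because $\tilde p(\alpha/N)$ is a fixed multiple of $\tilde p(\alpha)$; and since $\mathbb{Q}_{>0}^n$ is dense in $\mathbb{R}_{>0}^n$ and $\tilde p$ is continuous, $\tilde p$ --- and hence $p$ on all of $\Sigma$ --- is determined. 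By Theorem~\ref{thm31} this $p|_\Sigma$ is precisely the intrinsically defined symbol of $Q$, so the symbol of $Q$ is e-spectrally determined.

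I expect the real work to be bookkeeping rather than any single hard step: one must check that in the point-reduction case the leading term of (\ref{PiPkalphaPiExp}) behaves as claimed (which the one-dimensionality of $H^2_{k\alpha}$ makes automatic, up to the harmless normalization that a $0$-dimensional symplectic manifold carries unit mass), and one must pin down the homogeneity of $\tilde p$ against the conic structure of $\Sigma$. The one place that genuinely needs an argument, albeit a short one, is the passage from the lattice data $\{p|_{\phi^{-1}(\alpha)}\}_{\alpha\in\mathbb{Z}_{>0}^n}$ to the whole symbol; the conceptual point that makes it work --- and the template for the ``reduction in stages'' argument of \S 8 --- is that although the e-spectrum only probes $p$ along the directions $\phi^{-1}(\alpha)$, homogeneity of $p$ together with the density of these directions propagates that partial information to the entire symplectic cone.
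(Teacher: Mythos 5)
Your proposal is correct and follows essentially the same route as the paper: apply Theorem~\ref{thm2} with $G=T^n$, use the point reduction and one-dimensionality of $H^2_{k\alpha}$ to read off $p|_{\phi^{-1}(\alpha)}$ from the leading term for each $\alpha\in\mathbb{Z}_{>0}^n$, and then propagate by homogeneity of $p$ on the cone $\Sigma=\mathbb{C}^n-0$ plus density and continuity. The only difference is presentational --- you make explicit the $T^n$-invariance giving $p=\tilde p\circ\phi$ and allow general homogeneity degree, whereas the paper fixes order zero --- but the argument is the same.
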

\begin{proof}
The leading term in the asymptotic expansion (\ref{PiPkalphaPiExp}) determines the symbol of $Q_\hbar$ on the set (\ref{redspatalpha}); however, this set consists of the single point $P_\alpha$; so it in effect determines the symbol of $Q_\hbar$ at $P_\alpha$ and the symbol $p$ of $P$ on the set, $\Phi^{-1}(\alpha)$. Moreover, since $P$ is a classical pseudo-differential operator on $S^{2n-1}$ of order zero, $p$ is a homogeneous functions  of order zero on the punctured cotangent bundle $T^*S^{2n-1}-0$, and its restriction to the symplectic cone $\Sigma = \mathbb C^n-0$ is a homogeneous function of degree zero on $\Sigma$. Thus if $z$ is an element of $\phi^{-1}(\alpha)$, i.e. $(|z_1|^2, \cdots, |z_n|^2)=\alpha$, then $p$ takes the same value at $z$ and at $tz$, so $p$ is spectrally determined on the set of $z$'s satisfying 
\begin{equation}
(|z_1|^2, \cdots, |z_n|^2) = t\alpha
\end{equation}
and since this contains the set of points $(z_1, \cdots, z_n) \in \mathbb C^n-0$ for which the $|z_i|$'s are rational numbers it is a dense subset of $\mathbb C^n-0$, so it follow that the symbol of $\Pi P \Pi$ is e-spectrally determined. 
\end{proof}

\begin{remark}
In some interesting cases this become an e-spectral result for $Q$ itself. For instance this is the case for Toeplitz operators of the form $\Pi M_F \Pi$, where $M_F: L^2(S^{2n-1}) \to L^2(S^{2n-1})$ is the operator ``multiplication by a function $F \in C^\infty(S^{2n-1})$".
\end{remark}

\section{Toric varieties}

By a reduction in stages argument we will show in this section how to extract from the example above inverse spectral  results for a much larger class of example: toric varieties. To describe these examples let $G$ be a subtorus of $T^n$ and let 
\begin{equation}
\phi: \mathbb C^n-0 \to \mathfrak g^*
\end{equation}
be the moment map associated with the action of $G$ on $\mathbb C^n-0$. Then if $\alpha$ is a regular value of this map and $G$ acts freely on $\phi^{-1}(\alpha)$, the symplectic reduction 
\begin{equation}\label{quoG}
M =\phi^{-1}(\alpha) / G
\end{equation}
is well-defined and inherits from $\mathbb C^n-0$  a K\"ahler structure. Moreover, from the action of $T^n$ on $\mathbb C^n-0$ it inherits a Hamiltonian action of the quotient torus $K=T^n/G$. 
\begin{lemma}
The $K$-action on $M$ is a toric action.
\end{lemma}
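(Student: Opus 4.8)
The plan is to verify, one at a time, the conditions defining a toric action — an effective Hamiltonian action of a torus whose dimension is half that of the manifold — for the residual $K$-action on $M$. The fact that $K$ acts at all, and does so Hamiltonianly, is formal and comes from reduction in stages; the dimension condition is a count; effectiveness is the only part with real content, and I would spend most of the effort there.

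First I would set up the reduction-in-stages picture. Since the $G$-moment map $\phi$ is the composite of the $T^n$-moment map $\Phi(z) = (|z_1|^2,\dots,|z_n|^2)$ with the projection $\iota^*\colon(\mathfrak t^n)^* \to \mathfrak g^*$ dual to the inclusion $\mathfrak g \hookrightarrow \mathfrak t^n$, the function $\phi$ is $T^n$-invariant. Hence $\phi^{-1}(\alpha)$ is $T^n$-stable, $G$ acts on it (freely, by hypothesis), and — because $T^n$ is abelian, so $G$ is normal — the quotient torus $K = T^n/G$ acts on $M = \phi^{-1}(\alpha)/G$. Reducing $\mathbb C^n - 0$ by the normal subgroup $G$ at the level $\alpha$ and then passing to the quotient group is exactly reduction in stages: $M$ carries the reduced K\"ahler form, $K$ acts preserving it, and a moment map for the $K$-action is induced by $z \mapsto \Phi(z) - \beta_0$ for any fixed $\beta_0$ in the affine subspace $(\iota^*)^{-1}(\alpha)$, a coset of $\ker\iota^* \cong \mathfrak k^*$ in which $\Phi$ takes its values along $\phi^{-1}(\alpha)$. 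Writing $d = \dim G$, regularity of $\alpha$ gives $\dim_{\mathbb R}\phi^{-1}(\alpha) = 2n - d$, freeness of $G$ gives $\dim_{\mathbb R} M = 2n - 2d$, and $\dim K = n - d = \tfrac12\dim_{\mathbb R} M$; so the dimension condition holds.

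The heart of the matter — and the step I expect to be the main obstacle — is effectiveness. I would deduce it from the claim that $\phi^{-1}(\alpha)$ meets the open, dense, $T^n$-invariant set $U = \{z : z_i \neq 0 \text{ for all } i\} \cong (\mathbb C^*)^n$, on which $T^n$ acts freely. Granting this, if $k \in K$ acts trivially on $M$ and $t \in T^n$ is a lift, then for any $z \in U \cap \phi^{-1}(\alpha)$ one has $t\cdot z = g\cdot z$ for some $g \in G$; since $z \in U$ has trivial $T^n$-stabilizer, $t = g \in G$, so $k$ is trivial, and $K$ acts effectively. To prove the claim I would argue that no coordinate $z_i$ can vanish identically on $\phi^{-1}(\alpha)$: if it did, the $i$-th coordinate circle would act trivially on $\phi^{-1}(\alpha)$, hence (by freeness of $G$) cannot lie in $G$, and restricting all the data to the coordinate hyperplane $\{z_i = 0\} \cong \mathbb C^{n-1}$ would present the very same set $\phi^{-1}(\alpha)$ as the reduction of $\mathbb C^{n-1} - 0$ by the isomorphic image $\bar G$ of $G$ in $T^{n-1}$, at a value that is again regular — forcing $\dim_{\mathbb R}\phi^{-1}(\alpha) = 2(n-1) - d$, in conflict with the earlier value $2n - d$. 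Thus no $z_i$ vanishes identically; since $\Phi(\phi^{-1}(\alpha))$ is the convex polyhedron $\Delta = \mathbb R^n_{\ge 0}\cap(\iota^*)^{-1}(\alpha)$, and a convex set contained in $\bigcup_i\{x_i = 0\}$ must lie in a single $\{x_i = 0\}$, we conclude that $\Delta$ meets the open orthant $\mathbb R^n_{>0}$, i.e.\ $U \cap \phi^{-1}(\alpha) \neq \emptyset$, which finishes the argument.

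As a backup I would keep in reserve — and perhaps record in a remark — the algebro-geometric shortcut: by the Kempf--Ness theorem $M$ is the GIT quotient of an open subset of $\mathbb C^n$ by $G_{\mathbb C}$, hence contains $(\mathbb C^*)^n/G_{\mathbb C} \cong K_{\mathbb C}$ as a dense orbit on which $K$ acts by translations, which is the defining property of a toric variety. The symplectic argument above, however, is self-contained and stays within the framework already developed in the paper.
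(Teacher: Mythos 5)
Your proof is correct, but it takes a genuinely different route from the paper's. The paper argues by a second reduction in stages: for $\beta$ a regular value of the $K$-moment map, the symplectic reduction of $M$ by $K$ at $\beta$ is identified with the symplectic reduction of $\mathbb C^n-0$ by the full torus $T^n$ at the corresponding point $\delta\in\mathfrak t_n^*$, and the latter is a point because the $T^n$-action on $\mathbb C^n-0$ is toric; the paper thereby establishes directly the property ``reduced spaces at regular values are points,'' which is exactly what gets used in the theorem that follows. You instead verify the textbook definition of a toric action --- effective, Hamiltonian, with $\dim K=\tfrac12\dim_{\mathbb R}M$ --- head on. Your Hamiltonian structure and dimension count agree with the paper's implicit setup, and your effectiveness argument (showing via convexity of $\Phi(\phi^{-1}(\alpha))$ and a dimension contradiction that $\phi^{-1}(\alpha)$ meets the free locus $(\mathbb C^*)^n$, then lifting a trivially acting element of $K$ to $T^n$) supplies a point the paper leaves entirely unexamined. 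The trade-off: the paper's version is shorter and outputs precisely the statement needed downstream, whereas yours is more self-contained as a proof of toricness in the standard sense but must still pass through Delzant/convexity afterwards (as the paper in fact does, immediately after the lemma) to conclude that the reduced spaces $\psi^{-1}(\beta)/K$ are points. The Kempf--Ness/GIT shortcut you mention is a reasonable third route, but you are right to keep it as a remark, since it steps outside the symplectic framework the paper works in.
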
 
\begin{proof} 
Since the Lie algebra $\mathfrak k$ of $K$ is a quotient algebra of the Lie algebra $\mathfrak t_n$ of $\mathbb T^n$ one has an inclusion of $\mathfrak k^*$ into $\mathfrak t_n^*$, and if $\beta \in \mathfrak k^*$ is a regular value of the $K$ moment map of $M$ into $\mathfrak k^*$, its image $\delta$ in $\mathfrak t_n^*$ is a regular value of the $T^n$ moment map of $\mathbb C^n-0$ into $\mathfrak t_n^*$. Moreover, the symplectic reduction of $M$ with respect to $\beta$ coincides with the symplectic reduction of $\mathbb C^n-0$ with respect to $\delta$. However, since the action of $\mathbb T^n$ on $\mathbb C^n-0$ is toric the second of these reduced spaces is just a point and hence so is the first. 
\end{proof}

Thus by a theorem of Delzant, the moment map 
\begin{equation}\label{mmapK}
\psi: M \to \mathfrak k^*
\end{equation}
defined by this action has the property that if $\beta$ is a regular value of $\psi$, the reduced space $\phi^{-1}(\beta)/K$ is just a point $p_0$. In fact the image of the map (\ref{mmapK}) is the moment polytope $\Delta_M$ of $M$ and $\beta \in \Delta_M$ is a regular value of $\psi$ if and only if $\beta$ is in the interior $\mathring{\Delta}_M$ of $\Delta_M$. Thus if $M_0$ is the set of points in $M$ at which $K$ acts freely, one gets from (\ref{mmapK}) a bijective map 
\begin{equation}
M_0/K \to \Delta_0.
\end{equation}
Hence one can identify the points $p_0$ above with elements of $\mathring{\Delta}_M$. 

Coming back to the definition (\ref{quoG}). Suppose that the ``$\alpha$" in this definition is an element of the weight lattice of $G$, and let $\mathfrak g_\alpha$ be the annihilator of $\alpha$ in $\mathfrak g$. Then $\mathfrak g_\alpha$ is the Lie algebra of a subtorus $G_\alpha$ of $G$. Now let
\begin{equation}
\tilde \varphi_\alpha: T^*S^{2n-1} \to \mathfrak g_\alpha^*
\end{equation}
be the moment map associated to the lifted Hamiltonian action of $G_\alpha$ on $T^*S^{2n-1}$. Then its restriction 
\begin{equation}
\varphi_\alpha: \mathbb C^n-0 \to \mathfrak g_\alpha^*
\end{equation}
is, by definition, the moment map associated with the action of $G_\alpha$ on this cone. Thus the symplectic reduction $\tilde \varphi_\alpha^{-1}(0)/G_\alpha$ is the cotangent space of the quotient manifold 
\[ Y = S^{2n-1}/G_\alpha \]
and the reduced symplectic cone 
\begin{equation}
C=\varphi_\alpha^{-1}(0)/G_\alpha
\end{equation}
sits inside $T^*Y$ as a symplectic sub-cone. Moreover one has a residual action of the group $S^1=G/G_\alpha$  on this cone, and if 
\begin{equation}
\gamma: C \to \mathbb R
\end{equation}
is the moment map associated with this action, the symplectic reduction 
\begin{equation}
\gamma^{-1}(1)/S^1
\end{equation}
is another description of $M$ and the symplectic reduction 
\begin{equation}
\gamma^{-1}(k)/S^1
\end{equation}
is the manifold $M$; but now with its symplectic form $\omega_M$ replaced by $k \omega_M$. 

Now let $H^2(S^{2n-1})$ be the Hardy space (\ref{HSS2n-1}) and let $H^2(Y)$ be the space of $G_\alpha$-invariant elements in $H^2(S^{2n-1})$ and 
\begin{equation}\label{H2Yl}
H^2(Y) = \bigoplus_l H^2(Y)_l
\end{equation}
the decomposition of this space into weight spaces with respect to the residual action of $S^1$ on $H^2(Y)$. These can be regard as quantizations of the symplectic manifold $(M, l\omega)$ and the quantum analogue of the assertion that the action of $K$ on $M$ is \emph{toric} is 
\begin{lemma}
The action of $K$ on the summands of (\ref{H2Yl}) are \emph{multiplicity free}. In other words, the weight spaces of the representation of the torus $K$ on the space $H^2(Y)_l$ are one dimensional. 
\end{lemma}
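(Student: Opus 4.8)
The plan is to make the decomposition (\ref{H2Yl}) completely explicit in terms of monomials and read off the multiplicities directly; the argument will be the precise quantum mirror of the proof just given for the geometric Lemma. Recall first that the Hardy space (\ref{HSS2n-1}) has as an orthogonal basis the monomials $z^\beta=z_1^{\beta_1}\cdots z_n^{\beta_n}$ with $\beta\in\mathbb{Z}_{\ge 0}^n$, and that this is precisely the weight decomposition of $H^2(S^{2n-1})$ under $T^n$: the vector $z^\beta$ spans the weight space of weight $\beta\in\widehat{T^n}=\mathbb{Z}^n$. In particular the representation of $T^n$ on $H^2(S^{2n-1})$ is multiplicity free. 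Since a $T^n$-weight vector is automatically a weight vector for the subtorus $G_\alpha$, the space $H^2(Y)$ of $G_\alpha$-invariants is spanned by those $z^\beta$ with $\beta|_{\mathfrak g_\alpha}=0$; because $\mathfrak g_\alpha=\ker\alpha$, such a $\beta$ has $\beta|_{\mathfrak g}=l\,\alpha$ for an integer $l$, namely the weight of $z^\beta$ under the residual circle $S^1=G/G_\alpha$. Hence the summand $H^2(Y)_l$ of (\ref{H2Yl}) has as basis the monomials indexed by
\[
S_l=\{\beta\in\mathbb{Z}_{\ge 0}^n\ :\ \beta|_{\mathfrak g}=l\,\alpha\}.
\]

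Next I would identify the $K$-weights. Fix $\beta_0\in S_l$ (if $S_l=\emptyset$ there is nothing to prove). The $T^n$-action on $H^2(Y)_l$, after the harmless twist that makes the subgroup $G$ act trivially (obtained by tensoring the $T^n$-representation with the character $-\beta_0$ of $T^n$, which restricts to the inverse of the common $G$-character $l\alpha$), descends to an action of $K=T^n/G$; this is the action in the statement — equivalently the one induced by the natural $K$-action on the prequantum line bundle over $M$ whose sections make up $H^2(Y)_l$. Under it the monomial $z^\beta$, $\beta\in S_l$, is a $K$-weight vector of weight $\beta-\beta_0$, and $\beta-\beta_0$ lies in the character lattice $\widehat K=\{\gamma\in\mathbb{Z}^n:\gamma|_{\mathfrak g}=0\}$ of $K$ because it kills $\mathfrak g$. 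Thus the $K$-weight of a basis monomial of $H^2(Y)_l$ determines its exponent: the $\gamma$-weight space of $K$ in $H^2(Y)_l$ is spanned by the single monomial $z^{\beta_0+\gamma}$ if $\beta_0+\gamma\in\mathbb{Z}_{\ge 0}^n$, and is $\{0\}$ otherwise. That is exactly the assertion that the $K$-weight spaces of $H^2(Y)_l$ are at most one dimensional, i.e. that the $K$-action on the summands of (\ref{H2Yl}) is multiplicity free.

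This mirrors the geometric Lemma step for step: there one identified the symplectic reduction of $M$ at a regular value $\beta$ of $\psi$ with the reduction of $\mathbb C^n-0$ at the lift $\delta\in\mathfrak t_n^*$, which is a single point because the $T^n$-action is toric; here the $\gamma$-weight space of $K$ in $H^2(Y)_l$ is identified with the $\delta$-weight space of $T^n$ in $H^2(S^{2n-1})$ for the lift $\delta=\beta_0+\gamma$, which is at most one dimensional because the $T^n$-action is multiplicity free — the quantum counterpart of ``reduces to a point''. I expect the only genuinely delicate point to be the normalization bookkeeping: pinning down which power of the prequantum line bundle over $M$ the index $l$ refers to, which lift of the $K$-action to that line bundle is intended, and checking that primitivity of $\alpha$ (forced by $\alpha$ being a regular value on which $G$ acts freely) makes $l$ an honest integer, so that the exponents in $S_l$ all lie in a single coset of $\widehat K$. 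The rest follows immediately from the monomial basis.
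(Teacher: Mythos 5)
Your proposal is correct and follows essentially the same route as the paper: both arguments identify each $K$-weight space of $H^2(Y)_l$ with a single $T^n$-weight space of $H^2(S^{2n-1})$, which is one dimensional because it is spanned by a single monomial $z^\beta$. Your version is simply a more explicit rendering of this (including the affine twist by $\beta_0$ needed to make the $T^n$-weights into honest characters of $K=T^n/G$, a bookkeeping point the paper passes over silently).
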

\begin{proof} 
By definition $K$ is the quotient group $T^n/G$, so one has a natural inclusion map $\iota: \mathbb Z_K \hookrightarrow \mathbb Z^n$ of the weight lattice of $K$ into the weight lattice of $\mathbb T^n$. Moreover, for $\beta \in \mathbb Z_K$ and $\delta=\iota(\beta)$ the weight space $H^2(S^{2n-1})_\delta$ coincides with the weight space $H^2(Y)_\beta$, and we showed in section 1 that the first of these spaces is  one dimensional, hence so is the second. 
\end{proof}

As a consequence we prove  
\begin{theorem}
Let $\Pi$ be the orthogonal projection of $L^2(Y)$ onto $H^2(Y)$, $P: C^\infty(Y) \to C^\infty(Y)$ a $K$-invariant zeroth order pseudodifferential operator and $Q: H^2(Y) \to H^2(Y)$ the associate Toeplitz operator 
\begin{equation}
Q=\Pi P \Pi. 
\end{equation}
Then the symbol of $Q$ is e-spectrally determined. 
\end{theorem}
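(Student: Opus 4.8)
The plan is to combine the two lemmas just proved with Theorem~\ref{thm2}, applied in the degenerate case in which the reduced space is a single point, and then to propagate the recovered symbol values over a dense subset of the Toeplitz cone using homogeneity of the symbol.

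First I would fix notation. Let $C\subset T^*Y$ be the symplectic cone supporting $\Pi=\pi_Y$; by Theorem~\ref{thm31} the intrinsic symbol of $Q=\Pi P\Pi$ is $p|_C$, where $p$ is the leading symbol of $P$, homogeneous of degree $0$ since $P$ has order zero. So it suffices to recover $p|_C$ from the asymptotic expansions \eqref{PiPkalphaPiExp}. The full residual torus $\widetilde K=\mathbb T^n/G_\alpha$ acts on $Y$, hence Hamiltonianly on $C$ with a homogeneous (degree one) moment map $\Phi\colon C\to \widetilde{\mathfrak k}^*$; here $\widetilde K$ is an extension of $K=\mathbb T^n/G$ by $S^1=G/G_\alpha$, the $S^1$-factor being the one whose reduction of $C$ at level $l$ produces the Kähler cone over $(M,l\omega)$. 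Since $P$ and $\Pi$ are invariant under this residual action, $Q$ preserves each joint $\widetilde K$-weight space of $H^2(Y)$, and by the multiplicity-free lemma (the quantum analogue of the assertion that $K$ acts torically on $M$) every such weight space $\bigl(H^2(Y)_l\bigr)_\beta$ is at most one-dimensional.

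Now fix a rational weight $\beta$ in the interior $\mathring\Delta_M$ of the moment polytope of $M$. For all large $k$ along which $k\beta$ is integral, the joint weight $(k,k\beta)=k\cdot(1,\beta)$ is a regular value of $\Phi$, and — by the lemma that the $K$-action on $M$ is toric, together with Delzant's theorem — the symplectic reduction of $C$ at this weight (first by $S^1$ at level $k$, giving $(M,k\omega)$, then by $K$ at $k\beta$, giving the reduction of $(M,\omega)$ at $\beta$) is a single point $p_0=p_0(\beta)$. Hence in Theorem~\ref{thm2} we have $m=\tfrac12\dim\{p_0\}=0$, and since the corresponding weight space is one-dimensional the spectral measure in \eqref{PiPkalphaPiExp} is the Dirac mass $\delta_{\lambda_k(\beta)}$ at the unique eigenvalue $\lambda_k(\beta)$ of $Q$ there. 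Thus \eqref{PiPkalphaPiExp} collapses to $f(\lambda_k(\beta))\sim \sum_i\mu_{\beta,i}(f)\hbar^i$ with $\mu_{\beta,0}(f)=f\bigl(p_\beta(p_0)\bigr)$ by \eqref{mualpha0}, where $p_\beta(p_0)$ is the common value of $p$ on the level set $\Phi^{-1}\bigl(k(1,\beta)\bigr)\subset C$ (constant there precisely because the reduced space is a point). Letting $k\to\infty$ gives $\lambda_k(\beta)\to p_\beta(p_0)$, so the e-spectrum determines the value of $p$ on that level set; and since $p$ is homogeneous of degree $0$ while $\Phi$ is homogeneous of degree $1$, $p$ takes the same value $p_\beta(p_0)$ on the whole ray-preimage $\Phi^{-1}\bigl(\mathbb R_{>0}\cdot(1,\beta)\bigr)$.

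Finally, as $\beta$ ranges over rational points of $\mathring\Delta_M$ the rays $\mathbb R_{>0}\cdot(1,\beta)$ are dense in the open moment cone $\Phi(C)$, so $\bigcup_\beta\Phi^{-1}\bigl(\mathbb R_{>0}\cdot(1,\beta)\bigr)$ is dense in $C$; by continuity of $p$ this determines $p|_C$, i.e. the symbol of $Q$, from the expansions \eqref{PiPkalphaPiExp}. I expect the main obstacle to be the reduction-in-stages bookkeeping of the middle paragraphs: identifying the joint weight spaces of $H^2(Y)$ with the spaces to which Theorem~\ref{thm2} literally applies, and verifying that the iterated symplectic reduction of $C$ (first by $G/G_\alpha$, then by $K$) is genuinely a single point — which is exactly where the toric lemma and Delzant's theorem are used. (One should also note in passing that the self-adjointness and positivity hypotheses of Theorem~\ref{thm2} serve only to make the spectrum discrete, which here is automatic since the relevant weight spaces are finite-dimensional.)
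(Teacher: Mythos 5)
Your proposal is correct and follows essentially the same route as the paper: the toric lemma makes the reduced spaces $\Sigma_\alpha$ single points, the multiplicity-free lemma turns the expansions \eqref{PiPkalphaPiExp} into expansions for individual eigenvalues whose leading terms are values of $p$ on moment-map preimages of lattice points, and homogeneity of $p$ plus density of the rational rays recovers $p$ on all of the cone. Your write-up merely organizes the limit differently (fixing a rational $\beta$ and letting $k\to\infty$ along $k\beta$, rather than sweeping lattice points of $l\mathring{\Delta}_M$ over all $l$), which is the same argument in different bookkeeping.
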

\begin{proof}
Since the action of $K$ on the symplectic manifold $(M, l\omega)$ is a toric action, the symplectic manifold (\ref{sigmaalpha}) in Theorem \ref{thm2} are just points. Moreover, these points can be identified with the lattice points of the moment polytope $l\Delta_M$. In addition, since the action of $K$ on $H^2(Y)_l$ is multiplicity free, the asymptotic expansions (\ref{PiPkalphaPiExp}) are just asymptotic expansions for the eigenvalues of the semiclassical Toeplitz operator $Q_\hbar$, $\hbar=1/l$. Moreover, by ``multiplicity free" the submanifolds (\ref{sigmaalpha}) are just points and thus leading terms (\ref{mualpha0}) in these expansions are the restrictions of $p$ to the pre-image with respect to the moment map of the lattice points in $l\mathring{\Delta}_M$. However, the union of these $l\mathring{\Delta}_M$'s is the entire weight lattice $\mathbb Z_K$ of $K$, and the symbol $p$ is a homogeneous function of order zero, so it is e-spectrally determined on the preimage of $\mathbb R^+\mathbb Z_K$ (and, since $\mathbb R^+ \mathbb Z_K$ is dense in $\mathfrak k^*$, is therefore spectrally determined.) 
\end{proof}

\end{document}